\newtheorem{theorem}{Theorem}[section]
\newtheorem{proposition}[theorem]{Proposition}
\theoremstyle{definition}
\newtheorem{definition}[theorem]{Definition}
\theoremstyle{remark}
\newtheorem{remark}[theorem]{Remark}
\numberwithin{equation}{section}
\begin{document}
\title[Material interpenetration]{Elliptic systems and material interpenetration}
\author{Giovanni Alessandrini}
\address{Dipartimento di Matematica e Informatica, Universit\`a degli Studi di Trieste, Italy}

\email{alessang@units.it}
\thanks{The first author was supported  in part by MiUR, PRIN no. 2006014115}

\author{Vincenzo Nesi}
\address{Dipartimento di Matematica,
La Sapienza, Universit\`a di Roma, Italy}
\email{nesi@mat.uniroma1.it}
\thanks{The second author was supported in part by MiUR, PRIN
no. 2006017833.}


\begin{abstract}
We classify the second order, linear, two by two systems for which the two fundamental theorems for planar harmonic mappings, the Rad\'{o}--Kneser--Choquet Theorem and the H. Lewy Theorem, hold. They are those which, up to a linear change of variable, can be written in diagonal form with \emph{the same} operator on both diagonal blocks.
In particular, we prove that the aforementioned Theorems  cannot be extended to solutions of either the Lam\`{e} system of elasticity, or of elliptic systems in diagonal form, even with just slightly different operators for the two components.
\end{abstract}

\maketitle
\section{Introduction}
\label{intro}

``A basic requirement of continuum mechanics is that interpenetration of matter
does not occur, i.e. that in any deformed configuration the mapping giving the position $u(x)$ of a particle in terms of its position $x$ in the reference configuration be invertible''. J. M. Ball \cite{jmball}.

It is then a natural question to ask what systems of equations, among those used as models for elastostatics, give rise to invertible solutions when reasonable boundary conditions are prescribed.

In this note we shall prove by an example that the Lam\`e system of isotropic, linearized elasticity in the plane, with \emph{constant} Lam\`e coefficients, may lead to physically unacceptable solutions, because interpenetration of matter occurs. Let us recall here that the same phenomenon was previously found by Fosdick and Royer--Carfagni \cite{frc} for a more involved anisotropic linear system, by elaborating on an example due to Lekhnitskii \cite{lek}. In higher dimensions similar phenomena occur. From the mathematical point of view a basic example is due to De Giorgi \cite{degiorgi}. In all these examples, however, a basic common feature is the presence of some sort of point singularity in the solution itself (in dimension greater than two) or at least in the gradient in any dimension.
Such a singularity can only be present when the coefficients of the elliptic system are irregular, for instance discontinuous. Our examples are different from those previously known in several ways, but the most crucial difference is that we choose smooth and in fact \emph{constant} coefficients in our systems. Therefore our examples, besides bringing a new argument to the many already well known, see for instance Ciarlet \cite[p. 286]{ciarlet} and the references therein, about the limitations of linearized elasticity, shed some new light on the tightness of certain classical properties enjoyed by harmonic mapping showing that they cannot be easily extended even within the class of constant coefficient systems.

We now recall some fundamental properties of planar harmonic mappings, see the book of Duren \cite{duren} for a very broad treatment of this subject.

 We begin with the classical theorem of  Rad\'{o}, Kneser and Choquet. This theorem which was first stated by  Rad\'{o} \cite{rado1}, proved by Kneser \cite{kneser} immediately after   and then independently rediscovered by Choquet \cite{Choquet}, remains the basic unequalled result of invertibility for mappings solving an elliptic system of equations.

Let $B$ be an open disk in the plane, let $\Phi:\partial B\to \gamma \subset \mathbb R^2$ be a homeomorphism of $\partial B$ onto a simple closed curve $\gamma$. Let $u\in C^2(B,\mathbb R^2)\cup C(\overline{B},\mathbb R^2)$ be the solution to the Dirichlet problem
\begin{equation}\label{rado}
\left\{
\begin{array}{ccc}
\Delta u=0&,&\hbox{in } B,\\
u=\Phi&,& \hbox{on } \partial B.
\end{array}
\right.
\end{equation}
Let $D$ be the bounded region such that $\partial D=\gamma$. The Rad\'{o}--Kneser--Choquet Theorem states the following.
\begin{theorem}
If $D$ is convex, then $u$ is a homeomorphism of $\overline{B}$ onto $\overline{D}$.
\end{theorem}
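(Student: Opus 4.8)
My plan is to separate a \emph{local} assertion — that the Jacobian $J_u$ never vanishes in $B$, so that $u$ is a local homeomorphism there — from the \emph{global} conclusion, which I would then extract from the local one by degree theory and the maximum principle. The global step is soft; establishing $J_u\neq0$ is the analytic heart of the matter and, I expect, the only genuinely delicate point.

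\textbf{The global step.} Granting $J_u\neq0$ on $B$, I would first note that $J_u$ has constant sign (since $B$ is connected), and, after possibly composing $u$ with a reflection of $\mathbb R^2$ — which affects neither the hypotheses, $D$ staying convex, nor the conclusion — assume $J_u>0$. To see $u(\overline B)\subseteq\overline D$, take any $w_0\in\partial D$ and a supporting line at it, i.e.\ a unit vector $e$ with $\langle e,w\rangle\le\langle e,w_0\rangle$ for all $w\in\overline D$; since $\langle e,u\rangle$ is harmonic with boundary values in $\overline D$, the maximum principle gives $\langle e,u\rangle\le\langle e,w_0\rangle$ on $\overline B$, and intersecting these half-planes over all supporting lines yields $u(\overline B)\subseteq\overline D$. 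Because $J_u\neq0$, $u$ is open on $B$, so $u(B)$ is open and contained in $\overline D$, hence $u(B)\subseteq D$. For $w\in D$ the Brouwer degree $\deg(u,B,w)$ is defined (as $w\notin u(\partial B)=\gamma$) and equals the winding number of $\Phi=u|_{\partial B}$ about $w$, which is $\pm1$ since $\Phi$ is a homeomorphism onto the Jordan curve $\gamma=\partial D$ and $w$ lies inside; but since every preimage of $w$ is an interior regular point with $J_u>0$, this degree also equals $\#u^{-1}(w)\ge0$, so both equal $1$. Thus $u$ is a continuous open bijection $B\to D$, hence a homeomorphism, and as it also maps $\partial B$ homeomorphically onto $\partial D$ with disjoint images, $u\colon\overline B\to\overline D$ is a continuous bijection of a compact space onto a Hausdorff space, therefore a homeomorphism.

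\textbf{Non-vanishing of $J_u$.} Here is where I would use convexity and harmonicity together, arguing by contradiction. If $J_u(z_0)=0$ at some $z_0\in B$, then $\nabla u_1(z_0)$ and $\nabla u_2(z_0)$ are linearly dependent, so $a\nabla u_1(z_0)+b\nabla u_2(z_0)=0$ for some $(a,b)\neq(0,0)$; put $h=au_1+bu_2$ and $c=h(z_0)$. Then $h$ is harmonic (hence real-analytic) in $B$, continuous on $\overline B$, $z_0$ is a critical point of $h$, and $h$ is not constant, since otherwise the simple closed curve $\Phi(\partial B)=\gamma$ would lie on the line $L=\{ax+by=c\}$. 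I would then use convexity to bound the number of sign changes of $h-c$ on $\partial B$ by $2$: if $L$ is a supporting line of $\overline D$ — in particular whenever $L$ meets $\overline D$ in at most one point, in a boundary segment, or not at all — then $h\le c$ or $h\ge c$ on $\partial B$, so $z_0$ is an interior extremum and the strong maximum principle already forces $h\equiv c$, a contradiction; otherwise $L$ meets $\partial D$ in exactly two points, splitting $\gamma$ into two arcs lying in the two open half-planes cut by $L$, so that $h-c$ has one sign on one boundary arc and the opposite sign on the other.

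\textbf{The obstacle.} Against this I would set the branching behaviour of $h$ at the critical point: locally $h(z)-c=\mathrm{Re}\bigl(\alpha(z-z_0)^{n}\bigr)+o(|z-z_0|^{n})$ with $\alpha\neq0$ and $n\ge2$, so the nodal set $\{h=c\}$ has $2n\ge4$ branches issuing from $z_0$ and $\{h>c\}\cap B$ has $n\ge2$ ``petals'' there. The maximum principle together with unique continuation rules out closed level loops inside $B$, and a Jordan-curve argument of the same flavour shows that distinct petals of $\{h>c\}$ at $z_0$ must lie in distinct connected components of $\{h>c\}\cap B$; moreover each such component must reach $\partial B$ at a point where $h>c$, or else the maximum principle would give $h\le c$ on it. Hence $\{h>c\}$, and likewise $\{h<c\}$, meets $\partial B$ in at least $n$ arcs, so $h-c$ changes sign at least $2n\ge4$ times on $\partial B$ — contradicting the bound just obtained, and establishing $J_u\neq0$. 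The main difficulty is exactly this last step: it rests on the structure of nodal sets of harmonic functions near critical points (real-analyticity excluding wild accumulation) and on the absence of closed level curves. This is, in substance, Kneser's classical lemma; in the complex notation $u=h+\overline g$ with $h,g$ holomorphic it amounts to showing that $g'/h'$ maps $B$ into the closed unit disk, and the same analytic content seems unavoidable (see \cite{duren}).
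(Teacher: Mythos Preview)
The paper does not give its own proof of this statement: Theorem~1.1 is the classical Rad\'{o}--Kneser--Choquet theorem, quoted as background with attributions to \cite{rado1}, \cite{kneser}, \cite{Choquet} and \cite{duren}; the paper's original contributions are Theorems~1.2, 1.4 and~1.6. So there is nothing in the paper to compare your argument against.

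That said, your sketch is essentially the classical Kneser--Choquet proof and is correct in outline. The decomposition into a local step ($J_u\neq0$) and a global degree-theoretic step is standard; your use of supporting half-planes and the maximum principle to trap $u(\overline B)$ inside $\overline D$, followed by the degree computation, is clean and correct. The heart of the matter, as you rightly identify, is the nodal-set argument: a nontrivial harmonic function with a critical point has at least four sign-change arcs of its level set emanating from that point, whereas convexity of $D$ forces at most two boundary sign changes of $h-c$. Your handling of the ``distinct petals lie in distinct components'' step is the one place that deserves a bit more care in a full write-up --- one typically closes up a path joining two petals through $z_0$ and invokes the minimum principle on the enclosed region, trapping a petal of the opposite sign --- but you flag this correctly as the delicate point and reference the right source. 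In short: the proposal is sound, but there is no ``paper's own proof'' to set it beside.
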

The main reasons why this theorem remains substantially unequalled are

\noindent
(i) no analogue of this theorem holds true in dimension three or higher, as it was shown by a striking example by Laugesen \cite{laugesen}, see also Melas \cite{melas},

\noindent
(ii) the convexity assumption on the target domain $D$ is optimal. In fact, it is known since Choquet \cite{Choquet}, that if $D$ is not convex then there exists homeomorphisms $\Phi:\partial B\to \partial D$, for which the injectivity of $u$, the solution to (\ref{rado}), fails. See also \cite{ankneser} for a thorough investigation of this issue.

Nevertheless various kinds of generalizations of the Rad\'{o}--Kneser--Choquet Theorem have been obtained. Regarding harmonic mappings between manifolds, see Schoen and Yau \cite{sy} and Jost \cite{jost}. For mappings $u$ whose components solve a linear elliptic equation, let us mention Bauman, Marini and Nesi \cite{bmn} and also \cite{an:arch}, \cite{an:beltrami}. It is worth pointing out that, in these last two papers the Rad\'{o}--Kneser--Choquet Theorem has been extended to linear elliptic equations in divergence form with merely bounded measurable coefficients.  For quasilinear equations of the $p$-Laplacian type see \cite{as}.

    As a remarkable special case of our  Theorem \ref{generale.th}, which will be stated here below, we prove that no analogue of the Rad\'{o}--Kneser--Choquet Theorem holds when the diagonal Laplacian system is replaced by a Lam\`e system with constant moduli of the following form
    \begin{equation*}
        \mu ~{\rm div} ((\nabla u)^T+ \nabla u) + \lambda ~\nabla ({\rm div} ~u)=0.
    \end{equation*}
More precisely,  we have the following.
\begin{theorem}\label{main.th}
Let  $\mu,\lambda \in \mathbb R$ with $\mu>0$ and $\mu+\lambda>0$. There exist a disk $B\subset \mathbb R^2$, a bounded convex domain $D\subset \mathbb R^2$ and a smooth diffeomorphism $\Phi:\partial B \to \partial D$, so that the unique solution $u\in W^{1,2}(B,\mathbb R^2)$ to
\begin{equation*}
\left\{
\begin{array}{ccc}
    \mu ~{\rm div} ((\nabla u)^T+ \nabla u) + \lambda ~\nabla ({\rm div} ~u)=0&,&\hbox{in }B,\\
    u=\Phi&,&\hbox{on } \partial B,
\end{array}
    \right.
    \end{equation*}
    is \emph{not} a homeomorphism of $B$ onto $D$.
    \end{theorem}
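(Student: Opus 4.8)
The plan is to reduce the Lamé system to a pair of decoupled equations by a clever linear change of unknowns and/or independent variables, exploiting the fact that, in the plane, the constant-coefficient Lamé operator is not conformally invariant in the way the Laplacian is. More concretely, I would first recall that every solution $u$ of the homogeneous Lamé system can be represented through the Kolosov–Muskhelishvili complex potentials: writing $z=x_1+ix_2$, there exist holomorphic functions $\varphi,\psi$ on $B$ such that
\begin{equation*}
u_1+iu_2 = \kappa\,\varphi(z) - z\,\overline{\varphi'(z)} - \overline{\psi(z)},
\end{equation*}
where $\kappa = (\lambda+3\mu)/(\lambda+\mu) > 1$ is the Kolosov constant determined by the hypotheses $\mu>0$, $\mu+\lambda>0$. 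The Jacobian determinant of $u$ can then be computed explicitly in terms of $\varphi,\psi$, and the key point is that it is \emph{not} forced to be of one sign: unlike the harmonic case ($\kappa$ would have to equal $1$, recovering $u_1+iu_2 = \varphi + \overline{\psi}$, i.e. a sum of a holomorphic and an antiholomorphic function, whose Jacobian has the sign of $|\varphi'|^2-|\psi'|^2$ and for which the Rad\'o--Kneser--Choquet machinery applies), the extra term $-z\overline{\varphi'}$ breaks the argument principle / Choquet-type reasoning.

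The second step is to exhibit an explicit pair $(\varphi,\psi)$ — polynomials of low degree should suffice — for which the resulting map $u$ has a Jacobian that changes sign inside $B$ while the boundary map $u|_{\partial B}$ is a diffeomorphism onto the boundary of a convex region. A natural first attempt: take $B$ the unit disk, $\varphi(z)=az$ and $\psi(z)=bz^2$ (or similar), compute $u$ and its Jacobian $J_u = |\partial_z u|^2 - |\partial_{\bar z} u|^2$ explicitly, and tune the real constants $a,b$ (together with $\kappa$, which is fixed but ranges over $(1,3]$ and more) so that: (a) $J_u$ vanishes on some curve through the interior; (b) the boundary restriction $\Phi = u|_{\partial B}$ is injective with image a convex curve. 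Convexity of $D$ and injectivity of $\Phi$ are both open/verifiable conditions on a finite-dimensional family, so one has enough freedom. Uniqueness of the $W^{1,2}$ solution with the given Dirichlet data follows from the standard Korn/Lax–Milgram theory for the Lamé system under $\mu>0$, $\mu+\lambda>0$ (which guarantees strong ellipticity and coercivity on $H^1_0$), so the $u$ we construct is \emph{the} solution, and since $J_u$ changes sign it cannot be a homeomorphism (a homeomorphism between planar domains that is $W^{1,2}$ and smooth in the interior must have Jacobian of constant sign where it is nonzero, by degree theory).

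The main obstacle I anticipate is the simultaneous bookkeeping in Step 2: ensuring that the \emph{same} parameters that make $J_u$ sign-changing in the interior also keep $\Phi$ a boundary diffeomorphism onto a convex curve. There is a tension — forcing the interior Jacobian to degenerate tends to push the boundary curve toward developing a cusp or a self-intersection — so the degrees of the polynomials $\varphi,\psi$ may need to be raised, or one may need to first normalize $\Phi$ to be, say, close to the identity on $\partial B$ and treat the Lamé correction perturbatively. A clean way to organize this is to parametrize by how close $\kappa$ is to $1$: for $\kappa = 1$ the map is harmonic and Rad\'o--Kneser--Choquet holds, so the failure must be produced at $\kappa \neq 1$; writing $\kappa = 1 + \varepsilon$ and expanding, one identifies the first-order term responsible for the Jacobian sign change and then checks it genuinely dominates for a concrete, not-necessarily-small choice of the data. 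Finally, I would double-check the regularity claim "$u$ smooth in $B$" — immediate here since the coefficients are constant, so interior analyticity is automatic — and the claim that $D$ is bounded and convex, which is the explicit geometric verification on the curve $\Phi(\partial B)$.
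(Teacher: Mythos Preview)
Your route via Kolosov--Muskhelishvili potentials is genuinely different from the paper's. The paper obtains Theorem~\ref{main.th} as an immediate corollary of the general Theorem~\ref{generale.th}: one simply writes the Lam\'e operator in block form and checks that $B,C,D$ are scalar multiples of $A$ only when $\mu+\lambda=0$. The actual construction sits in the proof of Theorem~\ref{generale.th}, and its decisive idea is \emph{not} to raise polynomial degree but to shift the disk. On $B=\{(x-\tfrac12)^2+y^2<\tfrac54\}$ one has $x^2+y^2-1=x$ along $\partial B$, so the quadratic solution $u=R_\theta(x^2+y^2-1,\,ky+p(x,y))^T$ restricts to $\partial B$ as a linear map perturbed by $p/k$; taking $k$ large makes $\Phi=u|_{\partial B}$ a diffeomorphism onto a convex curve, while $u(0,0)$ is shown to lie \emph{outside} $\overline D$. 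Thus the interior degeneracy and the boundary geometry are decoupled by the choice of center, not by adding parameters.

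Your plan, by contrast, leaves the central tension unresolved, and your own first ansatz already exhibits the failure. With $\varphi(z)=az$, $\psi(z)=bz^2$ ($a,b>0$) one gets, up to a constant factor, $u=(\kappa-1)az-b\bar z^{\,2}$, whose Jacobian $(\kappa-1)^2a^2-4b^2|z|^2$ changes sign inside the unit disk precisely when $2b>(\kappa-1)a$; but the boundary trace $\theta\mapsto(\kappa-1)a\,e^{i\theta}-b\,e^{-2i\theta}$ is then a hypocycloid with inner coefficient dominating, hence not simple. Pushing to higher-degree $\varphi,\psi$ or expanding in $\kappa-1$ does not by itself supply the missing mechanism (and $\kappa$ is fixed by $\mu,\lambda$, so a perturbative argument in $\kappa-1$ is not available); what you are missing is a device, like the paper's off-center disk, that makes the boundary trace \emph{linear} to leading order so that a scaling parameter can tame the quadratic correction independently of the interior zero of the Jacobian. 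A minor side remark: your identification of $\kappa=1$ with the harmonic case is off; the Laplacian diagonal system corresponds to $\mu+\lambda=0$, where $\kappa\to\infty$, while $\kappa=1$ would require $\mu=0$.
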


Note that the natural unknown for the Lam\`e system, should be the \emph{displacement} field $\delta$, rather than the \emph{deformation} field $u$. However,  due to the fact the the identity mapping $I$ is also a solution of the Lam\`e system, we have trivially that $\delta$ solves the Lam\`e system if and only $u =I + \delta$ solves the same system. This is the reason why in Theorem \ref{main.th} it is understood that the solution $u$ is representing a deformation field.

    In Remark \ref{more},  we shall see that the solution fails to be an homeomorphism in a very strong way, in fact $u$ maps $B$ onto a domain larger than $D$, moreover it folds itself along a curve, across which the orientation is reversed.

    Our next Theorem is a far reaching generalization of the previous one. It shows that the Rad\'{o}--Kneser--Choquet Theorem holds only if one deals with elliptic systems  \emph{of diagonal form with the same scalar elliptic operator on both components}.
We need some definitions. Consider a constant coefficients second order elliptic system of the form
\begin{equation}\label{gen.syst}
\left\{
\begin{array}{ccc}
  {\rm div}( A \nabla u^1+ B \nabla u^2)=0,      \\
 {\rm div}( C \nabla u^1+ D \nabla u^2)=0.
\end{array}
\right.
\end{equation}
where $A, B, C$ and $D$ are $2\times 2$ real constant matrices,  and the unknowns $u^1$ and $u^2$ are real valued functions.
we say that the system (\ref{gen.syst}) is elliptic if it satisfies the Legendre--Hadamard condition
\begin{equation}\label{ell-1}
\eta_1^2 A \xi\cdot \xi+\eta_1 \eta_2  (B+C) \xi\cdot \xi+ \eta_2^2 D\xi\cdot \xi>0,\quad \hbox{for every\quad} \xi, \eta\in \mathbb R^2\backslash\{0\}.
\end{equation}
This condition is weaker than the strong convexity condition, namely the positivity of the $4\times 4$ matrix given in block form as
\begin{equation*}
M= \left(
\begin{array}{ccc}
A& B      \\
C&D
\end{array}
\right)
\end{equation*}
and, as it is well known, ellipticity \eqref{ell-1} is the same as rank one convexity of the quadratic form associated to $M$, see \cite[Theorem 5.3]{dacorogna}.
We note that there is no loss of generality in assuming that the matrices $A,B,C$ and $D$ are symmetric.
\begin{definition}
We shall say that the system (\ref{gen.syst}) is \emph{equivalent} to the system
\begin{equation*}
\left\{
\begin{array}{ccc}
  {\rm div}( A^{\prime} \nabla u^1+ B^{\prime} \nabla u^2)=0,      \\
 {\rm div}( C^{\prime} \nabla u^1+ D^{\prime} \nabla u^2)=0
\end{array}
\right.
\end{equation*}
if there exists a non-singular $2\times 2$ matrix
$\left(
\begin{array}{cc}
  \alpha&  \beta   \\
  \gamma&   \delta\end{array}
\right)$
such that
\begin{equation}\label{equiv}
\left(
\begin{array}{ccc}
A& B      \\
C&D
\end{array}
\right)=
\left(
\begin{array}{cc}
  \alpha {\rm Id}&  \beta {\rm Id}  \\
  \gamma {\rm Id}&   \delta {\rm Id}
\end{array}
\right)
\left(
\begin{array}{cc}
A^{\prime}& B^{\prime}      \\
C^{\prime}&D^{\prime}
\end{array}
\right).
\end{equation}
\end{definition}
\begin{theorem}\label{generale.th}
Let the ellipticity condition (\ref{ell-1}) be satisfied.
The following alternative holds.

\noindent
Either

\noindent
i) the system (\ref{gen.syst}) is equivalent to the system
\begin{equation}\label{diag}
\left\{
\begin{array}{ccc}
  {\rm div}( A \nabla u^1)=0,      \\
 {\rm div}(A\nabla u^2)=0,
\end{array}
\right.
\end{equation}

\noindent
or

\noindent
ii)
     there exist a disk $B\subset \mathbb R^2$, a bounded convex domain $D\subset \mathbb R^2$  and a smooth diffeomorphism $\Phi:\partial B \to \partial D$, so that the unique solution $u=(u^1,u^2)\in W^{1,2}(B,\mathbb R^2)$ to
     \begin{equation*}
\left\{
\begin{array}{rcl}
  {\rm div}( A \nabla u^1+ B \nabla u^2)=0&,&\hbox{in $B$},      \\
 {\rm div}( C \nabla u^1+ D \nabla u^2)=0&,&\hbox{on $B$} ,\\
u=\Phi&,& \hbox{on } \partial B.
\end{array}
\right.
\end{equation*}
is \emph{not} a homeomorphism of $B$ onto $D$.
    \end{theorem}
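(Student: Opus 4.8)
We prove the alternative in the form ``not i) $\Rightarrow$ ii)''. First we unravel the failure of i). Taking $\eta=(1,0)$ in \eqref{ell-1} shows that $A$ is positive definite, and then a direct inspection of \eqref{equiv} shows that \eqref{gen.syst} is equivalent to \eqref{diag} \emph{if and only if} each of $B,C,D$ is a scalar multiple of $A$: indeed \eqref{equiv} forces $\alpha=1$, $B=\beta A$, $C=\gamma A$, $D=\delta A$, and, once this holds, ellipticity forces $\delta>0$ and $(\beta+\gamma)^2<4\delta$, whence $\delta-\beta\gamma\ne0$ and the change of variables in \eqref{equiv} is admissible. So from now on we assume that \emph{at least one of $B,C,D$ is not proportional to $A$}. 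We also record, for completeness, that in case i) every solution has both components solving the single scalar divergence--form elliptic equation $\mathrm{div}(A\nabla\,\cdot\,)=0$, hence so does every linear combination $a_1u^1+a_2u^2$; by the extension of the Rad\'o--Kneser--Choquet theorem to such equations (\cite{bmn}, \cite{an:arch}, \cite{an:beltrami}) the solution is then always a homeomorphism, so that i) and ii) are actually mutually exclusive and only ``not i) $\Rightarrow$ ii)'' is at stake.

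\textbf{The guiding mechanism.} What makes \eqref{diag} special is exactly that, after recombining the equations, both components satisfy the \emph{same} scalar elliptic equation, so that every $a\cdot u=a_1u^1+a_2u^2$ obeys the maximum principle and, more, has only saddle--type critical points; this is the heart of the classical proof. Conversely, if $u$ is a solution and $a\ne0$ is such that $a\cdot u$ attains an interior local extremum at some $x_0\in B$, then $u$ maps a neighbourhood of $x_0$ into a closed half--plane with $u(x_0)$ on its bounding line, and since a homeomorphism between planar domains is open (invariance of domain), such a $u$ cannot be a homeomorphism onto its image. Hence the task is: for each system \emph{not} equivalent to \eqref{diag}, produce a solution $u$ on a disk $B$ such that some $a\cdot u$ has an interior local extremum \emph{and} $\Phi:=u|_{\partial B}$ is a smooth diffeomorphism onto the boundary of a convex domain $D$; by uniqueness of the $W^{1,2}$--solution, this $u$ is then the solution attached to $\Phi$, which is precisely what ii) asks.

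\textbf{Reduction to normal forms and the construction.} Conclusion ii) is insensitive to three operations on \eqref{gen.syst}: (a) replacing the system by an equivalent one in the sense of \eqref{equiv}, which does not alter the solution set; (b) a constant linear change of the unknown $u\mapsto Qu$, which replaces $(D,\Phi)$ by $(QD,Q\Phi)$ and so preserves convexity and the diffeomorphism property; (c) a constant linear change of the independent variable, which merely turns the disk into an ellipse and is harmless, since in ii) the domain is ours to choose — one carries out the construction on the ellipse that corresponds to a disk and then transports it back. Modulo (a)--(c) one classifies the systems not equivalent to \eqref{diag} through the characteristic roots $\tau_1,\tau_2$ in the upper half--plane of $\det\mathcal{L}(1,\tau)$, where $\mathcal{L}(\xi)$ is the symbol of \eqref{gen.syst}: when $\tau_1\ne\tau_2$ and the symbol admits a simultaneous diagonalization one is reduced to the \emph{decoupled anisotropic} model $\Delta u^1=0$, $\mathrm{div}(\sigma\nabla u^2)=0$ with $\sigma$ symmetric positive definite and $\sigma\not\propto\mathrm{Id}$; otherwise one is left with \emph{coupled} models forming finitely many families with at most two real parameters, the double--characteristic case ($\det\mathcal{L}(\xi)\propto(\xi_1^2+\xi_2^2)^2$) containing the Lam\`e system, which is already disposed of by Theorem \ref{main.th} (compare Remark \ref{more}). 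For the decoupled anisotropic model one exploits that the $\sigma$--harmonic functions are the ordinary harmonic functions precomposed with an anisotropic dilation, and then chooses $u^1,u^2$ to be explicit low--degree solutions (a quadratic corrected by an affine term; recall that affine maps, the identity in particular, always solve \eqref{gen.syst}) whose mixed second derivatives disagree enough that a suitable $a\cdot u$ develops an interior local maximum, while keeping the boundary trace a convex diffeomorphism. The coupled non--Lam\`e models are handled in the same spirit, via the hyperanalytic representation $u(x)=\mathrm{Re}\,\bigl[P_1F_1(x_1+\tau_1x_2)+P_2F_2(x_1+\tau_2x_2)\bigr]$ with $F_1,F_2$ holomorphic and $P_1,P_2$ fixed polarization vectors, chosen so that $a\cdot u$ again has an interior extremum.

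\textbf{The main obstacle.} The delicate point is not the manufacture of a folding solution — the interior--extremum mechanism above is robust once i) fails — but the \emph{simultaneous} demand that the boundary values form a diffeomorphism onto the boundary of a \emph{convex} domain. The most naive folding solutions (for instance homogeneous quadratics) have boundary curves of turning number $\ge 2$, hence not convex, and the affine corrections that restore convexity of the boundary tend to unfold the interior map; reconciling the two forces one to use solutions of sufficiently high degree (or the representation formulas above), in which the low--frequency part of the boundary trace dominates, and to follow a homotopy of boundary data issuing from the identity solution, monitoring along it both the sign of the interior Jacobian and the signed curvature of the boundary curve. This balancing act, carried out case by case over the finitely many parametric normal forms, is where the real work of the proof lies, Theorem \ref{main.th} being the representative instance.
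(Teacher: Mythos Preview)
Your proposal is a strategy outline rather than a proof. You correctly identify the mechanism (an interior local extremum of some $a\cdot u$ obstructs the homeomorphism property), and the reduction of ``not i)'' to ``at least one of $B,C,D$ is not a scalar multiple of $A$'' is fine. But from there you only \emph{describe} what one would do: classify normal forms by the characteristic roots, and in each family ``choose explicit low--degree solutions\ldots whose mixed second derivatives disagree enough''. None of these solutions is ever written down, and your own final paragraph concedes that the ``balancing act'' between an interior fold and a convex boundary image ``is where the real work of the proof lies''---work that is absent. In addition, your appeal to Theorem~\ref{main.th} for the double--characteristic case is circular: in this paper Theorem~\ref{main.th} is proved \emph{as a corollary of} Theorem~\ref{generale.th}, by checking that the Lam\`e system is not equivalent to \eqref{diag}.

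The paper bypasses both the case split and the obstacle you flag with a single uniform construction. The key is Proposition~\ref{A.prop}: whenever \eqref{gen.syst} is not equivalent to \eqref{diag}, there exist $\theta$ and a quadratic $p$ such that $R_\theta\bigl(x^2+y^2,\;p(x,y)\bigr)^{T}$ solves \eqref{gen.syst}. Its proof is short linear algebra: the ansatz turns \eqref{gen.syst} into a $2\times3$ linear system for the coefficients $(a,b,c)$ of $p$ whose coefficient matrix is $-s_\theta F+c_\theta G$; if this had rank $\le1$ for \emph{every} $\theta$ one deduces that $B,C,D$ are all scalar multiples of $A$. Given such a solution, one adds the affine piece $(-1,\;ky)$ and takes the specific disk $B=\{(x-\tfrac12)^2+y^2<\tfrac54\}$, on whose boundary $x^2+y^2-1=x$; hence the boundary trace is $R_\theta\,\mathrm{diag}(1,k)\,(x,\;y+k^{-1}p(x,y))$, which for $k\ge|b|$ is a diffeomorphism onto a convex curve bounding $D$. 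Since $u(0,0)=R_\theta(-1,0)^{T}$ lies outside the strip $R_\theta\,\mathrm{diag}(1,k)\{(1-\sqrt5)/2\le v^1\le(1+\sqrt5)/2\}\supset\overline{D}$, the solution is not a homeomorphism onto $D$. No classification of characteristic roots, no decoupled/coupled dichotomy, and no case--by--case balancing is needed.
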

    The above results show that one of the most basic properties enjoyed by planar harmonic mappings cannot be extended to other elliptic systems in the plane. It is then natural to ask similar questions for another fundamental property of injective harmonic mapping. A benchmark of the theory is a result of H. Lewy \cite{lewy} proving that harmonic homeomorphisms are, in fact, diffeomorphisms. More precisely we have the following result.
    \begin{theorem}\label{lewy.th}(H. Lewy.)
    Lut $u=(u^1,u^2) :B\to \mathbb R^2$ be a harmonic mapping. If
    $u$ is invertible, then
    \begin{equation}
    \det Du\neq 0\quad\hbox{for every \quad} (x,y)\in B.
    \end{equation}
    \end{theorem}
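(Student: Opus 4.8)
The plan is to prove H. Lewy's theorem via complex‑analytic methods, exploiting the fact that in two dimensions a harmonic function is the real part of a holomorphic function. First I would write, locally on a simply connected subdomain of $B$, $u^1 = \operatorname{Re} f$ and $u^2 = \operatorname{Re} g$ for holomorphic functions $f,g$, and observe that the Jacobian of $u=(u^1,u^2)$ can be expressed in terms of the Wirtinger derivatives: writing $u = u^1 + i u^2$ and using $\partial u = \tfrac12(f' + i g')$, $\overline{\partial} u = \tfrac12(\overline{f'} + i \overline{g'})$ (after a suitable conjugation), one gets $\det Du = |\partial u|^2 - |\overline{\partial} u|^2$, so the zero set of the Jacobian is exactly the set where $|\partial u| = |\overline{\partial} u|$.

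The key step is then to show this set is empty under the invertibility hypothesis. The function $h := \partial u / \overline{\partial u}$ (defined where $\overline{\partial} u \neq 0$) is meromorphic, being the ratio of two holomorphic functions $\partial u$ and $\overline{\overline{\partial} u}$; at a Jacobian zero $z_0$ we have $|h(z_0)| = 1$. I would argue by the local structure of harmonic maps near such a point: after subtracting the value $u(z_0)$ and using a Taylor expansion, the lowest‑order homogeneous part of $u - u(z_0)$ is a harmonic polynomial whose leading behaviour, precisely because $|\partial u|=|\overline\partial u|$ there, is of the form $\operatorname{Re}(c z^k) + i \operatorname{Re}(c' z^k)$ with the two real parts proportional — i.e. the image of a small circle winds so as to force a fold or cusp. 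Concretely, one shows that near $z_0$ the map $u$ is locally $k$‑to‑$1$ onto its image for some $k \geq 2$, or else collapses an arc, either of which contradicts injectivity of $u$ on $B$. An efficient way to package this: the argument principle applied to $u - u(z_0)$ along a small circle about $z_0$ yields a winding number equal to the order of vanishing, and when the Jacobian vanishes this order, combined with the orientation reversal detectable from $|\partial u| = |\overline\partial u|$, is incompatible with $u$ being a homeomorphism (a homeomorphism of a disk has local degree $\pm 1$ everywhere).

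The main obstacle I anticipate is making rigorous the passage from "the Jacobian vanishes at $z_0$" to "$u$ fails to be locally injective near $z_0$" without circular reasoning: one must handle the case where $\partial u(z_0) = \overline{\partial} u(z_0) = 0$ simultaneously (both Wirtinger derivatives vanish), where the naive linearization gives no information and one genuinely needs the next nonvanishing term in the expansion of the holomorphic functions $f$ and $g$. The clean route is to note that $f'$ and $g'$ are holomorphic, hence $\partial u$ and $\overline{\partial} u$ have only isolated zeros unless identically zero (the latter excluded by nonconstancy), so near $z_0$ we may write $\partial u(z) = (z-z_0)^m \phi(z)$ and $\overline{\overline{\partial} u}(z) = (z-z_0)^n \psi(z)$ with $\phi,\psi$ nonvanishing holomorphic; the meromorphic function $h$ then extends across $z_0$ (or has a pole there), and in all cases the local degree of $u$ at $z_0$ is computed from the winding of $\operatorname{Re}(\text{leading term})$ to be $\geq 2$ in absolute value as soon as $|h(z_0)| = 1$, contradicting the degree‑one property of homeomorphisms. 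Assembling these local computations into the global statement $\det Du \neq 0$ everywhere in $B$ then completes the proof.
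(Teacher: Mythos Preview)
The paper does not contain a proof of Theorem~\ref{lewy.th}; it is stated as a classical result and attributed to H.~Lewy \cite{lewy}, with Duren's book \cite{duren} cited for background. So there is no ``paper's own proof'' to compare against, and your task reduces to producing a correct self-contained argument.

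Your complex-analytic framework is the right one: writing $u=u^1+iu^2$, the functions $\partial u$ and $\overline{\bar\partial u}$ are holomorphic and $\det Du=|\partial u|^2-|\bar\partial u|^2$. (Your notation slips: you write $h=\partial u/\overline{\partial u}$, which would have modulus one identically; you mean the meromorphic ratio $h=\partial u/\overline{\bar\partial u}$, and then $\det Du=0$ corresponds to $|h|=1$.) However, the step where you conclude ``local degree $\geq 2$ in absolute value as soon as $|h(z_0)|=1$'' is not justified and, as stated, is not correct: when $\partial u(z_0)\neq 0$ and $|\bar\partial u(z_0)|=|\partial u(z_0)|$ the linearisation collapses a small circle to a doubly traversed segment passing through $u(z_0)$, so the winding number about $u(z_0)$ is not even defined at leading order, and a small perturbation could in principle produce a simple curve of winding number $\pm 1$. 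You would need a genuine higher-order analysis here, and your case split on the orders $m,n$ of vanishing of $\partial u$ and $\overline{\bar\partial u}$ does not cover the situation $m=n$ with $|\phi(z_0)|=|\psi(z_0)|$, which recurs at the next level.

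There are two standard ways to close the gap cleanly. One is global: since $h$ is meromorphic and non-constant (the constant case being handled separately), the open mapping theorem forces $|h|$ to take values on both sides of $1$ near any point where $|h|=1$, so $\det Du$ \emph{changes sign} in every neighbourhood of $z_0$; but a homeomorphism of a planar domain is either sense-preserving or sense-reversing throughout, contradiction. The other is Lewy's original local argument: if $\det Du(z_0)=0$ then some nontrivial real combination $v=au^1+bu^2$ has $\nabla v(z_0)=0$; being harmonic and nonconstant, $v-v(z_0)$ behaves like $\mathrm{Re}\big(c(z-z_0)^n\big)$ with $n\geq 2$, so its zero level set has $n\geq 2$ arcs crossing at $z_0$; but this level set is $u^{-1}$ of a straight line, which under a homeomorphism must be a $1$-manifold. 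Either route is shorter and avoids the delicate leading-term degree computation you sketch.
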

    Also in this case the validity is limited to two dimensions. J. C. Wood \cite{wood2} found a  third degree polynomial harmonic mapping which provides a counterexample in dimension three. On the positive side, Hans Lewy \cite{lewy2} recognized that, in three dimensions, if $u$ is the \emph{gradient} of an harmonic function and it is a homeomorphism, then it is a diffeomorphism. This result was extended to any dimension in the remarkable paper by Gleason and Wolff \cite{gleasonwolff}.
In a different direction, several generalizations of Lewy's
Theorem have been achieved in dimension two when the components of
$u$ satisfy the \emph{same} linear elliptic equation of the form
${\rm div} (\sigma \nabla u^i)=0$. For the case of sufficiently
smooth $\sigma$ see \cite{bmn}. When $\sigma$ is allowed to be
discontinuous, weak forms of Lewy's Theorem have been obtained in
\cite{an:arch} and \cite{an:beltrami}. A version for $p$-Laplacian
type equations can be found in \cite{as}.

In the next theorem, we show, by means of examples, that Theorem
\ref{lewy.th} cannot  be extended to an arbitrary elliptic system
with constant coefficients, unless, again,  the systems has the
special  form \eqref{diag}.

\begin{theorem}\label{no-HL.th}
Let the ellipticity condition (\ref{ell-1}) be satisfied.
The following alternative holds.

\noindent
Either

\noindent
i) the system (\ref{gen.syst}) is equivalent to the system \eqref{diag}

\noindent
or

\noindent
ii)
     there exists a polynomial solution  to
     \begin{equation*}
\left\{
\begin{array}{rcl}
  {\rm div}( A \nabla u^1+ B \nabla u^2)=0&,&\hbox{in $B$},      \\
 {\rm div}( C \nabla u^1+ D \nabla u^2)=0&,&\hbox{on $B$} ,\end{array}
\right.
\end{equation*}
which is a homeomorphism of a closed disk $\overline{B}$ onto $u(\overline{B})$ and such that in the center of the disk, denoted by $O$, we have
\begin{equation*}
\det Du(O)=0.
\end{equation*}
    \end{theorem}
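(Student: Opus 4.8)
The plan is to mirror the structure that (presumably) underlies Theorem \ref{generale.th}: first reduce the general system by a linear change of the dependent variables to a normal form, and then, in the non-diagonal case, exhibit an explicit polynomial solution whose Jacobian vanishes at a prescribed point while the map stays injective on a small enough disk. Concretely, I would first observe, using the \emph{equivalence} relation of the Definition, that multiplying the pair $(u^1,u^2)$ by a constant non-singular matrix on the left corresponds exactly to left-multiplication of $\left(\begin{smallmatrix}A&B\\C&D\end{smallmatrix}\right)$ by $\left(\begin{smallmatrix}\alpha\,\mathrm{Id}&\beta\,\mathrm{Id}\\\gamma\,\mathrm{Id}&\delta\,\mathrm{Id}\end{smallmatrix}\right)$, hence also corresponds to an invertible linear change in the target $\mathbb R^2$; since injectivity and the vanishing of $\det Du$ are both invariant under composing $u$ with such a linear map, it suffices to prove the statement for one representative in each equivalence class. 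Since the ellipticity \eqref{ell-1} only involves the combination $B+C$ and each of $A,B,C,D$ may be taken symmetric, the equivalence freedom lets me reduce to a short list of normal forms; the non-diagonal alternative means precisely that no representative has the form \eqref{diag}, i.e.\ the two scalar operators ``genuinely differ'' (or there is off-diagonal coupling that cannot be removed).

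Next I would construct the polynomial example. The idea is to look for $u=(u^1,u^2)$ with $u^1,u^2$ homogeneous polynomials of low degree (degree two, at worst three) solving \eqref{gen.syst}; the system being constant-coefficient and homogeneous, the space of homogeneous polynomial solutions of each degree is finite-dimensional and computable, and ellipticity guarantees it is nontrivial and behaves like the harmonic case dimensionally. Writing $z=x+iy$ and using complex notation, harmonic components are real and imaginary parts of holomorphic/antiholomorphic polynomials; for the perturbed operators one gets, after the linear reduction, components that are combinations $\mathrm{Re}$ and $\mathrm{Im}$ of $z^k$ and $\bar z^k$ but with \emph{different} mixing coefficients in the two slots, precisely because the operators differ. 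I would then take a solution of the form $u = L\cdot(\text{leading quadratic or cubic term}) + (\text{linear term})$, choosing the linear term to be a suitable rotation/shear so that $Du(O)$ is exactly singular (rank one), while the second-order behaviour near $O$ is a nondegenerate fold-free perturbation — one wants the classical model $(x,y)\mapsto(x, y^2 + \dots)$ to be avoided and instead have the ``good'' cusp-like model where injectivity survives, e.g.\ $u\approx (x, xy)$-type behaviour is bad, whereas $u\approx(x^2-y^2, \text{something})$ combined with the right linear part can be made injective on a small disk. Then restrict to a disk $B$ of small radius centered at $O$: on a sufficiently small disk the polynomial map is a homeomorphism onto its image by a degree/winding-number argument (the boundary curve $u(\partial B)$ is a simple closed curve for small radius, and the number of preimages is constant), yet $\det Du(O)=0$ by construction.

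The genuinely delicate step — and the one I expect to be the main obstacle — is the simultaneous arrangement that (a) $\det Du$ vanishes \emph{at the single point} $O$ with the right sign behaviour around it, and (b) the map is \emph{globally injective} on the chosen disk. For harmonic maps Lewy's theorem forbids exactly this, so the construction must exploit, in an essential and quantitative way, the difference between the two operators: one needs the unique continuation / structure of solutions to force that when $\det Du(O)=0$ one can still have $\det Du>0$ on a punctured neighbourhood (so no fold), which is impossible for harmonic maps but becomes possible here. Making $\det Du \ge 0$ (vanishing only at $O$) typically requires choosing the polynomial so that $\det Du$ is, near $O$, a positive-definite quadratic form in $(x,y)$ plus higher order — this is where I would spend the real effort: parametrize the finite-dimensional solution space, compute $\det Du$ as a polynomial, and solve the (finitely many) inequalities/equations ensuring it is a sum of squares vanishing only at the origin. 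Ellipticity should be what makes this solvable; the diagonal case \eqref{diag} is presumably exactly the obstruction where $\det Du$ cannot be forced positive-definite without being forced positive everywhere. Finally I would package: pick the radius of $B$ small enough that the higher-order terms of both $u$ and $\det Du$ are dominated by the leading terms, conclude injectivity on $\overline B$ and $\det Du(O)=0$, completing alternative (ii).
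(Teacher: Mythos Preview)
Your outline has the right skeleton (polynomial solution plus linear correction, isolated zero of the Jacobian, injectivity on a small disk), but the key mechanism is missing, and one side remark is mistaken.

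First, a small correction: the paper's equivalence relation is left-multiplication of the block matrix by $\left(\begin{smallmatrix}\alpha I&\beta I\\ \gamma I&\delta I\end{smallmatrix}\right)$, which corresponds to taking linear combinations of the two \emph{equations}, not to a linear change of the unknowns. Hence equivalent systems have \emph{identical} solution sets, and the invariance of injectivity and of $\det Du$ under the equivalence is automatic; your reduction step is unnecessary (though harmless).

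The substantive gap is at the point you yourself flag as ``the real effort'': arranging $\det Du\ge 0$ with an isolated zero at $O$. Your plan to ``parametrize the solution space and solve finitely many inequalities'' does not identify why this is possible precisely when the system is not equivalent to \eqref{diag}. The paper's proof does not attack this by inequalities; instead it proves a structural lemma (Proposition~\ref{B.prop}): whenever the system is not equivalent to \eqref{diag}, there is an angle $\theta$ and a cubic $q$ such that $R_\theta\bigl(x(x^2+y^2),\,q(x,y)\bigr)^T$ is a solution. Adding the linear term $R_\theta(0,y)^T$ yields $u=R_\theta\bigl(x(x^2+y^2),\,y+q(x,y)\bigr)^T$, and then
\[
\det Du=(3x^2+y^2)\bigl(1+q_y\bigr)-2xy\,q_x=(3x^2+y^2)+O(|(x,y)|^3),
\]
which is automatically positive on a punctured neighbourhood of $O$ and zero at $O$, with no further inequalities to solve. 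The specific choice of $x(x^2+y^2)$ for one component (a function whose gradient vanishes only at the origin) is what makes the Jacobian computation trivial; this is the idea you are missing. Note also that quadratic solutions will not do: the same computation with the quadratic construction of Proposition~\ref{A.prop} gives $\det Du=2kx+2\bigl(b(x^2-y^2)+(c-a)xy\bigr)$, which changes sign across a curve through $O$, so your ``degree two, at worst three'' is too optimistic on the lower end.

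Finally, for the injectivity step the paper invokes the Meisters--Olech theorem (a local homeomorphism on $\overline B$ away from a single interior point, together with injectivity on $\partial B$, implies global injectivity), which is cleaner and more robust than the winding-number sketch you give.
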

     \begin{remark}
    It may be evident that the Rad\'{o}--Kneser--Choquet and the H. Lewy Theorems continue to hold for any system of the form (\ref{diag}),
    since it can be elementarily reduced to a Laplacian diagonal system via a linear change of the independent coordinates.
    It is although rather remarkable that Theorems \ref{generale.th}, \ref{no-HL.th} show that the Rad\'{o}--Kneser--Choquet and the H. Lewy Theorems do
    not extend to very slight perturbations of the Laplacian diagonal system such as, for instance, the following one
\begin{equation*}
\left\{
\begin{array}{ccc}
    u^1_{xx}+ u^1_{yy}=0,\\
    (1+\varepsilon)u^2_{xx}+ u^2_{yy}=0,\
\end{array}
    \right.
    \end{equation*}
where $\varepsilon$ is any positive number.
\end{remark}

    \section{Proofs}
In what follows, when no ambiguity occurs, we shall identify points $(x,y)\in \mathbb R^2$ with column vectors
$\left(
    \begin{array}{c}
    x\\
    y
    \end{array}
    \right)$.
    Also, for $\theta \in \mathbb R$, we shall denote  $c_{\theta} = \cos\theta$ ,  $s_{\theta} = \sin \theta$.
For the proofs of Theorems  \ref{generale.th} and \ref{no-HL.th} we shall make use of the following two propositions, which we will prove at the end of this Section.
\begin{proposition}\label{A.prop}
Let the ellipticity condition (\ref{ell-1}) be satisfied.
If the system (\ref{gen.syst}) is \emph{not} equivalent to
(\ref{diag}), then there exists $\theta \in [0,2 \pi]$ and a
quadratic polynomial $p(x,y)= \frac{1}{2}(a x^2+ 2 b x y + c y^2)$
such that
\begin{equation*}
\left(
\begin{array}{c}
  u^1   \\
 u^2
\end{array}
\right)
=
\left(
\begin{array}{cc}
  c_{\theta}  & -s_{\theta}   \\
  s_{\theta}&   c_{\theta}
\end{array}
\right)
\left(
\begin{array}{ccc}
x^2+y^2  \\
p(x,y)
\end{array}
\right)
\end{equation*}
is a solution to (\ref{gen.syst}).
\end{proposition}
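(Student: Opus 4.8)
The plan is to exploit the structural freedom given by the equivalence relation \eqref{equiv} together with the ellipticity hypothesis to normalize the system, and then to solve for a homogeneous quadratic vector polynomial by reducing the problem to a linear-algebra condition on the coefficient matrices. First I would observe that after multiplying on the left by a suitable non-singular matrix $\left(\begin{smallmatrix}\alpha&\beta\\\gamma&\delta\end{smallmatrix}\right)$ we may bring the system to a convenient normal form: the Legendre--Hadamard condition \eqref{ell-1} forces $A$ to be (sign-definite, hence after scaling) positive definite, and up to a linear change of the independent variables $(x,y)$ — which preserves being a diagonal Laplacian system and more generally maps \eqref{gen.syst} to another system of the same type — we may assume $A=\mathrm{Id}$, so that the first component equation reads $\Delta u^1 + \mathrm{div}(B\nabla u^2)=0$. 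I would then make the \emph{Ansatz} that $u^1 = x^2+y^2$ (for which $\Delta u^1 = 4$ and $D^2u^1 = 2\,\mathrm{Id}$) and that $u^2 = p(x,y)$ is a quadratic form with Hessian $H=\left(\begin{smallmatrix}a&b\\b&c\end{smallmatrix}\right)$; since all second derivatives are constant, the system \eqref{gen.syst} collapses to the two scalar equations $\mathrm{tr}(A\cdot 2\mathrm{Id}) + \mathrm{tr}(B H)=0$ and $\mathrm{tr}(C\cdot 2\mathrm{Id}) + \mathrm{tr}(D H)=0$, i.e.\ two affine conditions on the three unknowns $a,b,c$. Generically this underdetermined linear system has a solution, giving a quadratic solution pair of the desired shape; the rotation matrix $\left(\begin{smallmatrix}c_\theta&-s_\theta\\s_\theta&c_\theta\end{smallmatrix}\right)$ in the statement then accounts for the change of dependent variables we performed to reach the normal form (an equivalence of the special type \eqref{equiv} that additionally happens to be orthogonal).

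The delicate point — and I expect this to be the main obstacle — is precisely the word \emph{generically}: I must show that the two affine conditions on $(a,b,c)$ are consistent, i.e.\ that the relevant $2\times 3$ linear map $H\mapsto(\mathrm{tr}(BH),\mathrm{tr}(DH))$ together with the inhomogeneous data coming from $u^1$ actually admits a solution, \emph{and} that when it fails to admit a solution one is forced into the excluded case, namely that \eqref{gen.syst} is equivalent to the diagonal system \eqref{diag}. Equivalently, failure of solvability should correspond to a degeneracy in which $B$ and $C$ can be absorbed by a left multiplication of the form \eqref{equiv} — that is, the pair $(B,C)$ together with $A,D$ lies in the orbit of $(0,0,A,A)$. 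I would analyze this by writing out the map from $M=\left(\begin{smallmatrix}A&B\\C&D\end{smallmatrix}\right)$ to the $2\times 3$ matrix of constraint coefficients, and checking that its rank drop forces, via \eqref{ell-1}, the existence of a matrix $\left(\begin{smallmatrix}\alpha&\beta\\\gamma&\delta\end{smallmatrix}\right)$ realizing \eqref{equiv} with primed data $A'=B'=C'=\cdots$ as in \eqref{diag}; the symmetry normalization of $A,B,C,D$ noted in the text and the positivity coming from rank-one convexity are what make this rank computation tractable.

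Concretely, I would proceed in the following steps: (1) use \eqref{ell-1} with $\eta=(1,0)$ and $\eta=(0,1)$ to get $A>0$ and $D>0$, and pass to the normal form $A=\mathrm{Id}$ by a linear change of variables; (2) parametrize the equivalence class under \eqref{equiv} and record how $B,C,D$ transform; (3) for the Ansatz $(u^1,u^2)=(x^2+y^2,\,p)$ write the two constant-coefficient identities $4 + \mathrm{tr}(BH)=0$, $2\,\mathrm{tr}(C) + \mathrm{tr}(DH)=0$ as an affine system in $(a,b,c)$; (4) show solvability holds unless the coefficient data is such that the system is equivalent to \eqref{diag}, closing the dichotomy by producing, in the non-solvable case, the explicit matrix in \eqref{equiv} with primed data of the diagonal form; (5) finally, reinterpret the (orthogonal part of the) normalizing change of dependent variables as the rotation $c_\theta,s_\theta$ appearing in the statement, and unwind all normalizations to obtain the asserted solution of the \emph{original} system \eqref{gen.syst}. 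Steps (1)--(3) are routine; step (4), the consistency/dichotomy, is where the real content lies.
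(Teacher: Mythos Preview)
There is a genuine gap. The rotation $R_\theta$ in the statement acts on the \emph{dependent} variables $(u^1,u^2)$, whereas the equivalence \eqref{equiv} is left multiplication of the block matrix, i.e.\ it merely takes linear combinations of the two scalar equations and therefore does \emph{not} change the solution set at all. So nothing you do under \eqref{equiv} can be ``unwound'' as a rotation of $(u^1,u^2)$; your step~(5) is a non sequitur. The linear change of independent variables you invoke in step~(1) is also problematic: it turns $x^2+y^2$ (in the new coordinates) into a general positive definite quadratic form in the original coordinates, so after undoing it you no longer have the first component equal to $x^2+y^2$ as the Proposition requires.

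More decisively, with $\theta$ fixed your step~(4) is simply false. Take $A=D=\mathrm{Id}$, $B=0$, and $C=\left(\begin{smallmatrix}1&0\\0&0\end{smallmatrix}\right)$; this system satisfies \eqref{ell-1} and is \emph{not} equivalent to \eqref{diag} (equivalence would force $C$ to be a scalar multiple of $\mathrm{Id}$). Your two equations in step~(3) become $4+0=0$ and $2+\mathrm{tr}(H)=0$, the first of which is already inconsistent. The paper's proof avoids this by keeping $\theta$ as a genuine free parameter: writing the constraints as $(-s_\theta F+c_\theta G)(a,b,c)^T=Y$ with the $2\times3$ matrices $F,G$ built from $A,C$ and $B,D$ respectively, one shows that if this matrix has rank~$<2$ for \emph{every} $\theta$, then evaluating the resulting linear dependence at $\theta=0$ and $\theta=\tfrac{\pi}{2}$ forces $B=\sigma D$, $C=\gamma A$, and a further use of \eqref{ell-1} gives $D\in\mathbb{R}A$, i.e.\ case~(i). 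In the example above one checks that $\theta=\tfrac{\pi}{2}$ already gives rank~$2$. The missing idea in your outline is precisely this: $\theta$ is not a normalization artifact but the extra degree of freedom that makes the $2\times3$ linear system surjective outside case~(i).
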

\begin{proposition}\label{B.prop}
Let the ellipticity condition (\ref{ell-1}) be satisfied.
If the system (\ref{gen.syst}) is \emph{not} equivalent to
(\ref{diag}), then there exists $\theta \in [0,2 \pi]$ and a cubic
polynomial $$q(x,y)= \frac{1}{2}\left(a \frac{x^3}{3}+  b x^2 y +
c x y^2 +d \frac{y^3}{3}\right)$$ such that
\begin{equation*}
\left(
\begin{array}{c}
  u^1   \\
 u^2
\end{array}
\right)
=
\left(
\begin{array}{cc}
  c_{\theta}  & -s_{\theta}   \\
  s_{\theta}&   c_{\theta}
\end{array}
\right)
\left(
\begin{array}{ccc}
x(x^2+y^2)  \\
q(x,y)
\end{array}
\right)
\end{equation*}
is a solution to (\ref{gen.syst}).
\end{proposition}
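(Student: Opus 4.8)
\medskip
\noindent\textbf{Proof strategy for Proposition~\ref{B.prop}.}
The plan is to reduce the assertion to a single finite–dimensional linear system and to show it is solvable for a suitable angle. Set $R_\theta=\left(\begin{array}{cc}c_\theta&-s_\theta\\ s_\theta&c_\theta\end{array}\right)$ and carry out in (\ref{gen.syst}) the substitution $u=R_\theta v$. Since $A\nabla u^1+B\nabla u^2=(c_\theta A+s_\theta B)\nabla v^1+(c_\theta B-s_\theta A)\nabla v^2$, and similarly in the second equation, the system for $v=(v^1,v^2)$ becomes ${\rm div}(\tilde A_\theta\nabla v^1+\tilde B_\theta\nabla v^2)=0$ and ${\rm div}(\tilde C_\theta\nabla v^1+\tilde D_\theta\nabla v^2)=0$, with $\tilde A_\theta=c_\theta A+s_\theta B$, $\tilde B_\theta=-s_\theta A+c_\theta B$, and $\tilde C_\theta,\tilde D_\theta$ built from $C,D$ in the same way. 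Proposition~\ref{B.prop} is precisely the statement that, for some $\theta$, this rotated system has a homogeneous cubic solution with $v^1=x(x^2+y^2)$ and $v^2=q$ as in the statement. Now ${\rm div}(M\nabla(x^3+xy^2))$ and ${\rm div}(M\nabla q)$ are homogeneous polynomials of degree one whose two coefficients depend linearly on the entries of $M$ and---for the second one---on $(a,b,c,d)$; hence, for each fixed $\theta$, requiring that $v$ solve the rotated system is a linear system $\mathcal M(\theta)\,(a,b,c,d)^{T}=r(\theta)$ of four scalar equations in the four unknowns $a,b,c,d$. A direct computation identifies $\mathcal M(\theta)$ with the Sylvester matrix of the two binary quadratic forms $\beta_\theta(\xi)=\tilde B_{\theta,11}\xi_1^2+2\tilde B_{\theta,12}\xi_1\xi_2+\tilde B_{\theta,22}\xi_2^2$ and $\delta_\theta$ (formed from $\tilde D_\theta$ analogously), so that $\det\mathcal M(\theta)$ equals the resultant ${\rm Res}(\beta_\theta,\delta_\theta)$. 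It therefore suffices to produce a single $\theta$ with ${\rm Res}(\beta_\theta,\delta_\theta)\neq0$: for such a $\theta$ the matrix $\mathcal M(\theta)$ is invertible, the system determines $q$ uniquely, and $u=R_\theta\,(x(x^2+y^2),q)^{T}$ is the desired solution.

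The core of the argument is the claim that, under (\ref{ell-1}), if ${\rm Res}(\beta_\theta,\delta_\theta)=0$ for every $\theta\in[0,2\pi]$, then (\ref{gen.syst}) is equivalent to (\ref{diag}), contrary to the hypothesis. Write $\mathsf A(\xi)=A\xi\cdot\xi,\ldots,\mathsf D(\xi)=D\xi\cdot\xi$ and put $\Lambda(\xi)=\left(\begin{array}{cc}\mathsf A(\xi)&\mathsf B(\xi)\\ \mathsf C(\xi)&\mathsf D(\xi)\end{array}\right)$. Choosing $\eta=(1,0)$ and $\eta=(0,1)$ in (\ref{ell-1}) gives $\mathsf A,\mathsf D>0$ on $\mathbb R^2\setminus\{0\}$, and, since for each $\xi\neq0$ the left-hand side of (\ref{ell-1}) is a positive-definite binary form in $\eta$, its discriminant is negative, i.e.\ $(\mathsf B+\mathsf C)^2<4\mathsf A\mathsf D$ there; together with the elementary inequality $4\mathsf B\mathsf C\le(\mathsf B+\mathsf C)^2$ this shows the binary quartic $\det\Lambda=\mathsf A\mathsf D-\mathsf B\mathsf C$ is strictly positive on $\mathbb R^2\setminus\{0\}$. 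Hence $\det\Lambda\not\equiv0$, so its set $R$ of zeros in $\mathbb P^1(\mathbb C)$ is finite, nonempty, and consists of non-real points. Since $(\beta_\theta(\xi),\delta_\theta(\xi))^{T}=\Lambda(\xi)\,(-s_\theta,c_\theta)^{T}$, for every $\theta$ outside the finite set where $\beta_\theta$ or $\delta_\theta$ vanishes identically the hypothesis forces a common projective zero $\rho=\rho(\theta)$ of $\beta_\theta$ and $\delta_\theta$; then $\Lambda(\rho)\,(-s_\theta,c_\theta)^{T}=0$ with $(-s_\theta,c_\theta)\neq0$, so $\rho(\theta)\in R$.

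Since $R$ is finite while the set of admissible $\theta$ is infinite, some $\rho_0\in R$ satisfies $\rho(\theta)=\rho_0$ for infinitely many $\theta$. For all these $\theta$ one has $-s_\theta\mathsf A(\rho_0)+c_\theta\mathsf B(\rho_0)=0$ and $-s_\theta\mathsf C(\rho_0)+c_\theta\mathsf D(\rho_0)=0$; a $\mathbb C$-valued $\mathbb R$-linear functional on $\mathbb R^2$ that vanishes along more than one line through the origin is identically zero, hence $\mathsf A(\rho_0)=\mathsf B(\rho_0)=\mathsf C(\rho_0)=\mathsf D(\rho_0)=0$, i.e.\ $\Lambda(\rho_0)=0$. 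Because the forms have real coefficients and $\rho_0$ is non-real, they also vanish at $\bar\rho_0$, so all four are divisible by the positive-definite quadratic $\ell_0\bar\ell_0$, where $\ell_0$ is the linear form vanishing at $\rho_0$; being themselves quadratics, each of $\mathsf A,\mathsf B,\mathsf C,\mathsf D$ is a scalar multiple of $\ell_0\bar\ell_0$. Thus $A,B,C,D$ are all scalar multiples of a single matrix, which---since ellipticity prevents the two equations of (\ref{gen.syst}) from being proportional, so that the associated $2\times2$ matrix of scalars is nonsingular---is exactly the condition of the Definition for (\ref{gen.syst}) to be equivalent to (\ref{diag}). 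This contradiction proves the claim, hence Proposition~\ref{B.prop}.

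The step I expect to be the main obstacle is precisely the one just sketched: extracting global proportionality of $A,B,C,D$ from the assumption ${\rm Res}(\beta_\theta,\delta_\theta)\equiv0$. The remaining ingredients are either formal (the rotation substitution and the identification of $\mathcal M(\theta)$ with a Sylvester matrix) or routine bookkeeping of degenerate subcases---the finitely many $\theta$ at which $\beta_\theta$ or $\delta_\theta$ degenerates, and the possibility that $\det\Lambda$ has multiple zeros. Proposition~\ref{A.prop} is proved along the same lines, with $x^2+y^2$ and the quadratic $p$ in place of $x(x^2+y^2)$ and $q$: there ${\rm div}(M\nabla(x^2+y^2))$ and ${\rm div}(M\nabla p)$ are constants, the relevant matrix is $2\times3$, and what one needs from $\theta$ is that the quadratic forms $\beta_\theta$ and $\delta_\theta$ be linearly independent.
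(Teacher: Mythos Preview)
Your argument is correct and its overall architecture matches the paper's: reduce to a $4\times4$ linear system in the coefficients $(a,b,c,d)$ whose matrix is precisely the Sylvester matrix of the two binary quadratics $\tilde B_\theta\xi\cdot\xi$ and $\tilde D_\theta\xi\cdot\xi$, and then show that this matrix cannot be singular for \emph{every} $\theta$ unless the system is equivalent to (\ref{diag}).

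Where you diverge from the paper is in how you extract this last implication. The paper records the elementary fact that if the Sylvester determinant of two real symmetric $2\times2$ matrices vanishes and one of them is positive definite, then the two are linearly dependent; it then plugs in $\theta=0$ and $\theta=\frac{3\pi}{2}$ (where ellipticity makes $D$, respectively $A$, positive definite) to obtain $B=\sigma D$ and $C=\gamma A$, and finishes with a short continuity argument on an interval around $\theta=\frac{3\pi}{2}$. Your route is more global and algebraic: you show $\det\Lambda=\mathsf A\mathsf D-\mathsf B\mathsf C>0$ on $\mathbb R^2\setminus\{0\}$, confine the common root $\rho(\theta)$ to the finite non-real zero set of this quartic, pigeonhole to a fixed $\rho_0$ hit by infinitely many $\theta$, and conclude that all four forms $\mathsf A,\mathsf B,\mathsf C,\mathsf D$ vanish at $\rho_0$ and $\bar\rho_0$, hence are scalar multiples of a single positive definite quadratic. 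Both arguments are short; the paper's is marginally more elementary (no passage to $\mathbb P^1(\mathbb C)$, only Gaussian elimination), while yours treats the degenerate $\theta$'s uniformly and makes the parallel with Proposition~\ref{A.prop} transparent.
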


    \begin{proof}[Proof of Theorem \ref{generale.th}.]
    We assume that (\ref{gen.syst}) is \emph{not} equivalent to (\ref{diag}). We choose $\theta$ according to Proposition \ref{A.prop} and we write
    \begin{equation*}
    R_{\theta}=
\left(
\begin{array}{cc}
  c_{\theta}  & -s_{\theta}   \\
  s_{\theta}&   c_{\theta}
\end{array}
\right).
\end{equation*}
    Being linear mappings solutions to  (\ref{gen.syst}), we have that also the following is solution to  (\ref{gen.syst})
    \begin{equation*}
    \left(
\begin{array}{c}
  u^1   \\
 u^2
\end{array}
\right)
=
R_{\theta}
\left(
\begin{array}{ccc}
x^2+y^2 -1 \\
k y +p(x,y)
\end{array}
\right)
\end{equation*}
where $k\neq 0$ is a constant to be determined later on.

    We choose $B=\left\{(x,y)\in \mathbb R^2 : \left(x-\frac 1 2\right)^2 + y^2<\frac 5 4\right\}$. We have
\begin{equation*}
    x=-1 + x^2+y^2\qquad\hbox{on}\quad\partial B.
    \end{equation*}
    We now select $\Phi$. We set
    \begin{equation*}
    \begin{array}{ccc}
    M=  \left(
    \begin{array}{cc}
    1&0\\
    0&k
    \end{array}\right),
&\Psi(x,y)=
    \left(
    \begin{array}{c}
    x\\
    y + \frac{1}{k} p(x, y)
    \end{array}
    \right),& \Phi(x,y)=R_{\theta}
 M\Psi(x,y).
    \end{array}
    \end{equation*}
    A straightforward calculation shows that when
    \begin{equation}\label{k}
    k\geq |b|
    \end{equation}
    $\Psi$ is a homeomorphism of $\partial B$ onto a closed convex curve $\Gamma$. Consequently $\Phi$ is also a homeomorphism of $\partial B$ onto the closed convex curve $\gamma=R_{\theta}
M \Gamma$.

    Let $D$ be the bounded convex domain such that $\partial D =\gamma$. It is easy to check that $\overline{D}\subset R_{\theta} M
S$ where
    \begin{equation*}
    S=\left\{(v^1,v^2)\in \mathbb R^2: \frac{1-\sqrt{5}}{2}\leq v^1\leq \frac{1+\sqrt{5}}{2}\right\}.
    \end{equation*}
    However
    \begin{equation*}
     u(0,0)=-R_{\theta}M\left(
    \begin{array}{c}    1\\0
    \end{array}
    \right)\notin R_{\theta} M S.
    \end{equation*}
\end{proof}
    \begin{remark}
    \label{more}
    Note that we can compute the the Jacobian determinant of $u$ in terms of the coefficients of $p$ and obtain $\det Du = 2 k x + 2(b(x^2-y^2)+(c-a) x y)$. Hence the Jacobian determinant vanishes at $(0,0)$ and, in fact, it changes sign across its \emph{nodal line}
    \begin{equation*}
    H=\left\{(x,y)\in \mathbb R^2:  k x + (b(x^2-y^2)+(c-a) x y)=0\right\}.
    \end{equation*}
    which is always an hyperbola (unless $a=b=c=0$ when it degenerates in a straight line). Note that $k x + (b(x^2-y^2)+(c-a) x y)$ is positive in $(1,0)$ zero in $(0,0)$ and negative in $(-1,0)$ because of (\ref{k}). See Figures $1$ and $2$
where the behaviour of $u$ and its Jacobian determinant are depicted in the specific case of the Lam\`e system.
    \end{remark}

\begin{proof}[Proof of Theorem \ref{no-HL.th}.]
We assume again that (\ref{gen.syst}) is \emph{not} equivalent to (\ref{diag}). We choose $\theta$ and $q$ according to Proposition \ref{B.prop}. Being linear mappings solutions to  (\ref{gen.syst}), we have that also the following is solution to  (\ref{gen.syst})
    \begin{equation*}
        \left(
\begin{array}{c}
  u^1   \\
 u^2
\end{array}
\right)
=
\left(
\begin{array}{cc}
  c_{\theta}  & -s_{\theta}   \\
  s_{\theta}&   c_{\theta}
\end{array}
\right)
\left(
\begin{array}{ccc}
x(x^2+y^2) \\
y +q(x,y)
\end{array}
\right).
\end{equation*}
It is now easy to check that, once we have chosen the polynomial $q$ according to Proposition \ref{B.prop}, the following two properties hold. First there exists a positive radius $r$ such that
one has $\det Du(x,y)>0$ if $(x,y)\in B_r(O)\backslash \{O\}$, where we have set $O=(0,0)$, and second  $\det Du(O)=0$.
Choose $0<\rho<r$, and denote by
\begin{equation*}
\Phi=u \big|_{\partial B_{\rho}(O)}.
\end{equation*}
A very simple calculation shows that $\Phi$ maps $\partial B$ in a one to one way onto a closed curve $\gamma$ provided one has
\begin{equation*}
2+b \rho^2>0\quad\hbox{and \quad} 3+d \rho^2>0.
\end{equation*}
Let $D$ be the bounded domain such that $\partial D=\gamma$. We can now apply a topological result of Meisters and Olech \cite{meistol}. Indeed we have a smooth mapping $u$ defined on a closed disk $\overline{B}$ and which is a local homeomorphism at each point of $\overline{B}$, with the possible exception of the point $O$ only.  Moreover, the restriction of $u$ to $\partial B$ is a homeomorphism. Therefore the hypotheses of  Theorem 1 in \cite{meistol}  (see also Corollary 2) are satisfied and we can conclude  that $u$ is a homeomorphism of $\overline{B_{\rho}(O)}$ onto $\overline{D}$.
\end{proof}
\begin{proof}[Proof of Theorem \ref{main.th}.]
It suffices to verify that the Lam\`e system  is not equivalent to any elliptic system of the form (\ref{diag}). In fact it can be rewritten in the form (\ref{gen.syst}) with the
following choices
\begin{equation*}
\begin{array}{ccccc}
A=\left(
\begin{array}{cc}
 2 \mu +\lambda&0\\
 0&\mu
  \end{array}
\right),
&
B=C\left(
\begin{array}{cc}
 0 &\frac{ \mu +\lambda}{2}\\
 \frac{ \mu +\lambda}{2}&0     \end{array}
\right),
&
D=\left(
\begin{array}{cc}
\mu&0\\
 0&     2 \mu +\lambda
 \end{array}
\right).
\end{array}
\end{equation*}
Now $B, C$ and $D$ can be scalar multiples of $A$ only if $\mu+\lambda=0$, which contradicts the assumption $\mu+\lambda>0$.
\end{proof}
\begin{remark}
Note that our assumptions $\mu>0$, $\mu + \lambda > 0$ correspond the the strong convexity assumption, which, as is well known, is stronger that the ellipticity condition \eqref{ell-1}. \end{remark}

    In the following picture we illustrate our results for the case of the Lam\`e system. We choose
    $\lambda=\mu
=1$. With this choice, condition \eqref{k} takes the form $k\geq(1+\sqrt{10})\frac{\lambda+ 3 \mu}{\lambda+\mu}$ and, for the picture we have chosen the limiting value $k=2(1+\sqrt{10})$.
Note that, in order to facilitate visibility, the coordinates in the $u^1$ and the $u^2$ directions are scaled differently.

\begin{figure}[h]
\label{gamma}
\centerline{
\includegraphics[scale=.55,angle=0]{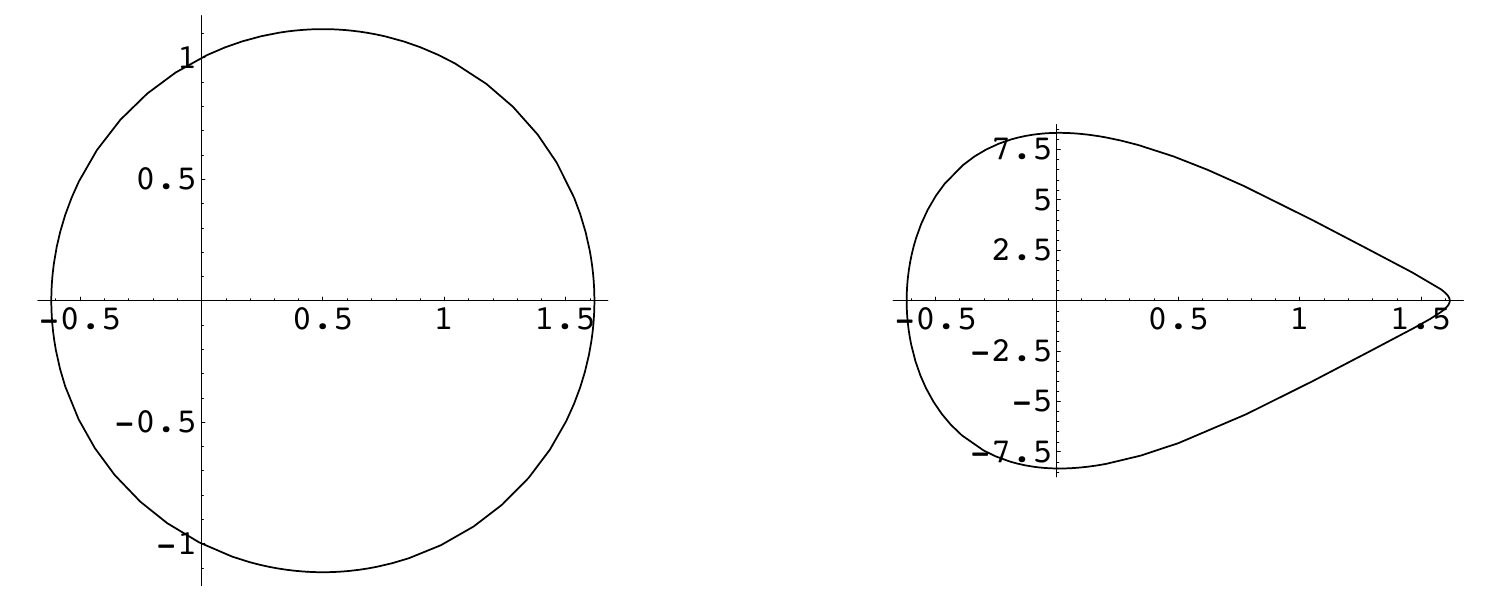}
}
\caption{$\partial B$ and its image $\Phi(\partial B)$.}
\end{figure}

\begin{figure}[h] \label{nodal}
\centerline{
\includegraphics[scale=1,angle=0]{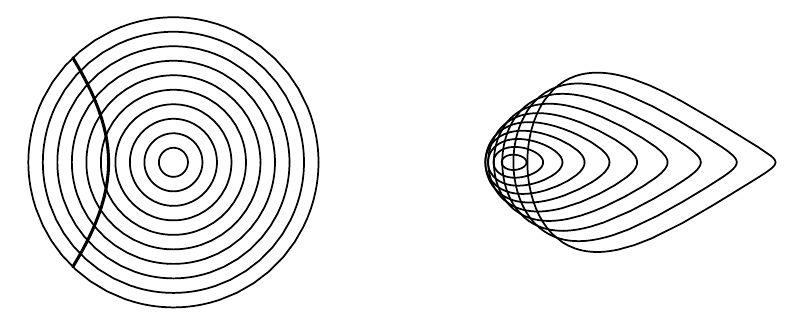}
}
\caption{Left: circles $C_r$ of varying radii and the nodal  line of the Jacobian (an hyperbola) drawn within $B$. Right:   the images $U(C_r)$.}
\end{figure}

\begin{proof}[Proof of Proposition \ref{A.prop}]
We look for $\theta\in [0,2 \pi]$
and a quadratic polynomial $p(x,y)= \frac{1}{2}(a x^2+ 2 b x y + c y^2)$ such that
\begin{equation}\label{quad.sol}
\left(
\begin{array}{c}
  u^1   \\
 u^2
\end{array}
\right)
=
\left(
\begin{array}{cc}
  c_{\theta}  & -s_{\theta}   \\
  s_{\theta}&   c_{\theta}
\end{array}
\right)
\left(
\begin{array}{ccc}
x^2+y^2  \\
p(x,y)
\end{array}
\right)
\end{equation}
is a solution to (\ref{gen.syst}).

Since the Hessian matrices of $u^i$ are constant, the system (\ref{gen.syst}) is equivalent to the following two equations
\begin{equation*}
\left(-s_{\theta} F+c_{\theta} G\right) \left(\begin{array}{c}
 a\\b\\c
  \end{array}
 \right)= Y
\end{equation*}
where
\begin{equation*}
\begin{array}{cccc}
F=\left(\begin{array}{ccc}
 a_{11}&2a_{12}& a_{22}\\
  c_{11}&2c_{12}& c_{22}
 \end{array}
 \right),
 &
G=\left(\begin{array}{ccc}
 b_{11}&2b_{12}& b_{22}\\
  d_{11}&2d_{12}& d_{22}
 \end{array}
 \right),
 \end{array}
\end{equation*}
and $ Y=\left(\begin{array}{c}
 Y^1\\
 Y^2
 \end{array}
 \right)$ is a vector of known data, possibly depending on $\theta$. Given $\theta$,
the above system has at least one solution if  the rank of  $-s_{\theta} F+c_{\theta} G$ is two and consequently we find a solution of the form \eqref{quad.sol} to \eqref{gen.syst}. If this were not the case, then for every $\theta$ there exists $\phi$ such that
\begin{equation}\label{linear.dip}
c_{\phi} (-s_{\theta} A +c_{\theta} B)+ s_{\phi}(-s_{\theta} C +c_{\theta} D)=0.
\end{equation}
Choosing $\theta=0$ yields that $B$ and $D$ are linearly
dependent.  Similarly, by choosing $\theta=\frac{\pi}{2}$, we get
that $A$ and $C$ are linearly dependent. Recalling that, by
ellipticity \eqref{ell-1}, $A$ and $D$ are positive definite, and
hence nontrivial, we obtain  that $B= \sigma D$ and $C= \gamma A$
for suitable constants $\sigma, \gamma \in \mathbb{R}$. Plugging
these linear dependencies into \eqref{linear.dip} and using once
more ellipticity, we obtain that also
 $D$ is a scalar multiple of $A$. In conclusion $B, C$ and $D$ are scalar multiples of $A$.

\end{proof}
\begin{proof}[Proof of Proposition \ref{B.prop}]
We look for  $\theta \in [0,2 \pi]$ and a cubic polynomial $$q(x,y)= \frac{1}{2}\left(a \frac{x^3}{3}+  b x^2 y + c x y^2 +d \frac{y^3}{3}\right)$$ such that
\begin{equation*}
\left(
\begin{array}{c}
  u^1   \\
 u^2
\end{array}
\right)
=
\left(
\begin{array}{cc}
  c_{\theta}  & -s_{\theta}   \\
  s_{\theta}&   c_{\theta}
\end{array}
\right)
\left(
\begin{array}{ccc}
x(x^2+y^2)  \\
q(x,y)
\end{array}
\right)
\end{equation*}
is a solution to (\ref{gen.syst}).
This time the Hessian matrices of $u^i$ are of the form $x H_1+y H_2$ for suitable constant matrices $H_1$ and $H_2$ and thus (\ref{gen.syst}) imposes the following four conditions.
\begin{equation*}
\left(-s_{\theta} F+c_{\theta} G\right) \left(\begin{array}{c}
 a\\b\\c\\d
  \end{array}
 \right) = Y
\end{equation*}
where
\begin{equation*}
\begin{array}{cccc}
F=\left(\begin{array}{cccc}
 a_{11}&2a_{12}& a_{22}&0\\
 0& a_{11}&2a_{12}& a_{22}\\
  c_{11}&2c_{12}& c_{22}&0\\
0& c_{11}&2c_{12}& c_{22}
 \end{array}
 \right),
 &
G=\left(\begin{array}{cccc}
 b_{11}&2b_{12}& b_{22}&0\\
  0&b_{11}&2b_{12}& b_{22}\\
   d_{11}&2d_{12}& d_{22}&0\\
  0&d_{11}&2d_{12}& d_{22}
 \end{array}
 \right),
 \end{array}
\end{equation*}
and $ Y\in \mathbb R^4 $ is a data vector.
We make use of the following linear algebra fact.
 Given any two $2\times 2$ symmetric
matrices
\begin{equation*}
M=\left(\begin{array}{cccc}
 m_{11}&m_{12}\\
m_{12}& m_{22}
 \end{array}
 \right)\hbox{and\quad}
S= \left(\begin{array}{cccc}
 s_{11}&s_{12}\\
s_{12}& s_{22}
 \end{array}
 \right),
 \end{equation*}
if  we have
 \begin{equation*}
 \det \left(\begin{array}{cccc}
 m_{11}&2m_{12}& m_{22}&0\\
 0& m_{11}&2m_{12}& m_{22}\\
  s_{11}&2s_{12}& s_{22}&0\\
0& s_{11}&2s_{12}& s_{22}\\
 \end{array}
 \right)=0,
\end{equation*}
and either $M$ or $S$ is positive definite, then $M$ and $S$ are
linearly dependent. This fact may be verified in many ways, for
instance with the aid of Gaussian elimination.
Assume that $(-s_{\theta} F+ c_{\theta} G)$ is singular for every $\theta$, then we must have that the matrix $(-s_{\theta} C+ c_{\theta} D)$
 is a scalar multiple of  $
(-s_{\theta} A+ c_{\theta} B) $ at least for all those   $\theta$
for which $ (-s_{\theta} A+ c_{\theta} B) $ is positive definite.
Equivalently, switching the roles of $A, B$ and $D, C$, if
$(-s_{\theta} C+ c_{\theta} D)$ is positive definite, then  $
(-s_{\theta} A+ c_{\theta} B) $  is a scalar multiple of
$(-s_{\theta} C+ c_{\theta} D)$. Recalling that the matrices $A,
D$ are positive definite by ellipticity, and choosing $\theta=0,
\frac{3\pi}{2}$, we deduce $B= \sigma D$ and $C= \gamma A$ for
suitable constants $\sigma, \gamma \in \mathbb{R}$. Moreover, it
is evident that there exists an open interval $I$ containing
$\frac{3\pi}{2}$ for which $(-s_{\theta} A+ c_{\theta} B) $
remains positive definite as long as $\theta \in I$. Thus, for all
$\theta\in I$, there exists $\phi$ such that \eqref{linear.dip}
holds true, and from now
 we  can argue similarly as in the proof of Proposition \ref{A.prop}.
\end{proof}

\providecommand{\bysame}{\leavevmode\hbox to3em{\hrulefill}\thinspace}
\providecommand{\MR}{\relax\ifhmode\unskip\space\fi MR }
\providecommand{\MRhref}[2]{%
  \href{http://www.ams.org/mathscinet-getitem?mr=#1}{#2}
}
\providecommand{\href}[2]{#2}

\end{document}